\DeclareSymbolFont{cyrletters}{OT2}{wncyr}{m}{n}
\DeclareMathSymbol{\Sha}{\mathalpha}{cyrletters}{"58}
\numberwithin{equation}{section}
\title[Average rank in families of quadratic twists]{Average rank in families of quadratic twists: a geometric point of view}
\author{Pierre Le Boudec}
\subjclass{$11$D$45$, $11$G$05$, $14$G$05$}
\keywords{Elliptic curves, quadratic twists, rational points, canonical height}
\address{EPFL SB MATHGEOM TAN \\ MA C$3$ $604$ (B\^{a}timent MA) \\ Station $8$ \\ \text{CH-$1015$} Lausanne \\ Switzerland}
\email{pierre.leboudec@epfl.ch}
\begin{document}

\makeatletter
\def\imod#1{\allowbreak\mkern10mu({\operator@font mod}\,\,#1)}
\makeatother

\newtheorem{lemma}{Lemma}
\newtheorem{theorem}{Theorem}
\newtheorem{corollary}{Corollary}
\newtheorem{proposition}{Proposition}
\newtheorem{conjecture}{Conjecture}
\newtheorem{conj}{Conjecture}
\renewcommand{\theconj}{\Alph{conj}}

\newcommand{\vol}{\operatorname{vol}}
\newcommand{\D}{\mathrm{d}}
\newcommand{\rank}{\operatorname{rank}}
\newcommand{\Pic}{\operatorname{Pic}}
\newcommand{\Gal}{\operatorname{Gal}}
\newcommand{\meas}{\operatorname{meas}}
\newcommand{\Spec}{\operatorname{Spec}}
\newcommand{\eff}{\operatorname{eff}}
\newcommand{\rad}{\operatorname{rad}}
\newcommand{\sq}{\operatorname{sq}}
\newcommand{\tors}{\operatorname{tors}}
\newcommand{\Cl}{\operatorname{Cl}}
\newcommand{\Reg}{\operatorname{Reg}}

\begin{abstract}
We investigate the average rank in the family of quadratic twists of a given elliptic curve defined over $\mathbb{Q}$, when the curves are ordered using the canonical height of their lowest non-torsion rational point.
\end{abstract}

\maketitle

\tableofcontents

\section{Introduction}

Let $E$ be the elliptic curve defined over $\mathbb{Q}$ by the Weierstrass equation
\begin{equation*}
y^2 = x^3 + A x + B,
\end{equation*}
where $(A,B) \in \mathbb{Z}^2$ satisfies $4 A^3 + 27 B^2 \neq 0$.

We extend the set of definition of the M\"{o}bius function to $\mathbb{Z}$ by setting $\mu(d) = \mu(-d)$ for $d < 0$ and $\mu(0) = 0$.  For every integer $d \in \mathbb{Z}$ with $|\mu(d)| = 1$, we denote by $E_d$ the quadratic twist of $E$ defined over $\mathbb{Q}$ by the equation
\begin{equation*}
d y^2 = x^3 + A x + B.
\end{equation*}
We insist on the fact that all along this article, we view $A$ and $B$ as being fixed, and $d$ as a varying parameter. In particular, the dependences on $A$ and $B$ of the constants involved in the notations $O$, $\ll$ and $\gg$ will not be specified.

Let $L(E_d,s)$ denote the Hasse-Weil $L$-function associated to the curve $E_d$. We let
$\rank_{\textrm{an}} E_d(\mathbb{Q})$ be the analytic rank of $E_d$ over $\mathbb{Q}$, that is the order of the zero of $L(E_d,s)$ at $1$. We also let $\rank E_d(\mathbb{Q})$ be the algebraic rank of $E_d$ over
$\mathbb{Q}$, that is the rank of the finitely generated abelian group $E(\mathbb{Q})$. Recall that the Parity Conjecture states that the two integers $\rank_{\textrm{an}} E_d(\mathbb{Q})$ and $\rank E_d(\mathbb{Q})$ have same parity, and the Birch and Swinnerton-Dyer Conjecture asserts that they are actually equal. 

We define the set
\begin{equation*}
\mathcal{S}(X) = \{ d \in \mathbb{Z}, |\mu(d)| = 1, |d| \leq X \}.
\end{equation*}
The conjecture of Goldfeld (see \cite{MR564926}) on the average size of $\rank E_d(\mathbb{Q})$ as $d$ runs over
$\mathcal{S}(X)$, and the Parity Conjecture imply that, for $\iota \in \{ 0, 1 \}$, we have
\begin{equation}
\label{rank at most 1}
\# \{d \in \mathcal{S}(X), \rank E_d(\mathbb{Q}) = \iota \} \sim \frac1{2} \# \mathcal{S}(X),
\end{equation}
and
\begin{equation}
\label{rank at least 2}
\# \{d \in \mathcal{S}(X), \rank E_d(\mathbb{Q}) \geq 2 \} = o(X).
\end{equation}
The estimates \eqref{rank at most 1} and \eqref{rank at least 2} are supported by the Katz-Sarnak Philosophy (see \cite{MR1659828}) about zeros of $L$-functions and also by Random Matrix Theory heuristics (see for instance \cite{MR1956231}). Unfortunately, they are expected to be currently out of reach. Indeed, we only know that the average rank is at least $1/2$ under the Parity Conjecture. However, we note that Heath-Brown \cite{MR1193603,MR1292115} has proved that in the case $(A,B) = (-1,0)$, the average rank is bounded by an explicit constant.

Ordering the family of curves $E_d$ using the parameter $d$, or any object among the discriminant, the coefficients of the minimal Weierstrass equation, the conductor, essentially amounts to the same thing, so that for each of these orderings, the situation is expected to be similar. The purpose of this article is to investigate this problem from a fundamentally different point of view, that is using a geometric flavoured ordering.

The main geometric invariant of an elliptic curve is its regulator. However, the regulator is defined as being equal to $1$ for rank $0$ curves, so that the number of curves $E_d$ with bounded regulator is infinite. In addition, noticing that the regulator is essentially the product of the canonical heights of the elements of an almost orthogonal basis of the Mordell-Weil lattice (see \cite[Theorem $4$.$2$]{MR717593}), we see that a bound on the regulator implies a much stronger bound on the canonical height of the lowest non-torsion rational point as soon as $\rank E_d(\mathbb{Q}) \geq 2$. Thus, the regulator does not seem to be well-suited for our purpose, and we introduce another geometric invariant.

Let $\hat{h}_{E_d}$ be the canonical height on the curve $E_d$ (see Section \ref{Section Preliminaries} for its definition), and let $E_d(\mathbb{Q})_{\tors}$ be the torsion part of the abelian group $ E_d(\mathbb{Q})$. The author \cite{LeBoudec} has recently investigated the quantity $\eta_d(A,B)$ defined by
\begin{equation*}
\log \eta_d(A,B) = \min \{ \hat{h}_{E_d}(P), P \in E_d(\mathbb{Q}) \smallsetminus E_d(\mathbb{Q})_{\tors} \},
\end{equation*}
if $\rank E_d(\mathbb{Q}) \geq 1$ and $\eta_d(A,B) = \infty$ if $\rank E_d(\mathbb{Q}) = 0$. Let us define the set
\begin{equation*}
\mathcal{H}(Y) = \{d \in \mathbb{Z}, |\mu(d)| = 1, \eta_d(A,B) \leq Y \}.
\end{equation*}
We will prove that $\mathcal{H}(Y)$ is a finite set. We can thus define
\begin{equation*}
\mathcal{AR}_{\textrm{an}}(Y) =
\frac1{\# \mathcal{H}(Y)} \sum_{d \in \mathcal{H}(Y)} \rank_{\textrm{an}} E_d(\mathbb{Q}),
\end{equation*}
and similarly
\begin{equation*}
\mathcal{AR}(Y)= \frac1{\# \mathcal{H}(Y)} \sum_{d \in \mathcal{H}(Y)} \rank E_d(\mathbb{Q}).
\end{equation*}

Note that if $d \in \mathcal{H}(Y)$, then we clearly have $\rank E_d(\mathbb{Q}) \geq 1$. In addition, we also have
$\rank_{\textrm{an}} E_d(\mathbb{Q}) \geq 1$. Indeed, if we had $\rank_{\textrm{an}} E_d(\mathbb{Q}) = 0$, then by the works of Kolyvagin \cite{MR954295} and Breuil, Conrad, Diamond and Taylor \cite{MR1839918}, we would also have
$\rank E_d(\mathbb{Q}) = 0$, which would contradict the fact that $d \in \mathcal{H}(Y)$. Therefore, using our ordering, we have discarded the curves with (analytic or algebraic) rank $0$. As a result, the expectations \eqref{rank at most 1} and \eqref{rank at least 2} might lead to the belief that the average rank should be expected to be equal to $1$. We prove that this is not the case.

\begin{theorem}
\label{Theorem Analytic}
We have the lower bound
\begin{equation*}
\liminf_{Y \to \infty} \mathcal{AR}_{\textrm{an}}(Y) > 1.
\end{equation*}
\end{theorem}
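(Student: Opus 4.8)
The plan is to convert the lower bound into a statement about root numbers, and then into an equidistribution question in the geometric ordering. By the discussion preceding the statement, every $d \in \mathcal{H}(Y)$ satisfies $\rank_{\textrm{an}} E_d(\mathbb{Q}) \geq 1$. Let $w(E_d) \in \{\pm 1\}$ be the sign of the functional equation of $L(E_d,s)$, so that $\rank_{\textrm{an}} E_d(\mathbb{Q})$ is even exactly when $w(E_d)=+1$. For such $d$ the analytic rank is then a positive even integer, hence at least $2$, and consequently
\begin{equation*}
\sum_{d \in \mathcal{H}(Y)} \rank_{\textrm{an}} E_d(\mathbb{Q}) \geq \#\mathcal{H}(Y) + \#\{d \in \mathcal{H}(Y) : w(E_d)=+1\}.
\end{equation*}
Dividing by $\#\mathcal{H}(Y)$ shows that
\begin{equation*}
\mathcal{AR}_{\textrm{an}}(Y) \geq 1 + \frac{\#\{d \in \mathcal{H}(Y) : w(E_d)=+1\}}{\#\mathcal{H}(Y)},
\end{equation*}
so it suffices to prove that a positive proportion of the twists in $\mathcal{H}(Y)$ have root number $+1$, uniformly as $Y \to \infty$.

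Next I would make the geometric ordering explicit. A non-torsion rational point of small canonical height on $E_d$ corresponds, after clearing denominators, to a triple of integers $(a,b,e)$ with $d\,b^2 = g(a,e)$, where $g(a,e) := a^3 + A a e^4 + B e^6$; equivalently $d$ is the squarefree part of $g(a,e)$. Using the comparison between the canonical height of this point and the coordinates $(a,e)$ established in \cite{LeBoudec}, the condition $\eta_d(A,B) \leq Y$ translates into the requirement that $(a,e)$ lie in an explicit expanding region $\mathcal{R}(Y)$. I would first estimate the number of admissible pairs $(a,e)$, and then pass to the count of the underlying set $\mathcal{H}(Y)$ itself; since $w(E_d)$ depends only on $d$, one must control the twists represented by several small points, which is exactly the kind of multiplicity analysis carried out in \cite{LeBoudec} and which I expect to affect only lower-order terms. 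In parallel, I would recall the classical formula for the root number of a quadratic twist: for $d$ coprime to $2 N_E$ one has $w(E_d) = w(E)\,\mathrm{sgn}(d)\,\left(\frac{d}{N_E}\right)$ up to bounded local correction factors at the finitely many primes dividing $2 N_E$. Thus, away from a controlled set of moduli, $w(E_d)$ is a fixed Jacobi symbol in $d$ twisted by the sign of $d$.

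The heart of the argument is then to show that $w(E_d)=+1$ occurs for a positive proportion of the $d$ arising from $(a,e) \in \mathcal{R}(Y)$. Substituting the description of $d$ as the squarefree part of $g(a,e)$ into the root-number formula, this reduces to bounding the signed sum
\begin{equation*}
S(Y) = \sum_{d \in \mathcal{H}(Y)} w(E_d),
\end{equation*}
since the proportion of $+1$'s equals $\tfrac{1}{2}\bigl(1 + S(Y)/\#\mathcal{H}(Y)\bigr)$; it is therefore enough to prove that $S(Y)$ is not asymptotic to $-\#\mathcal{H}(Y)$, the natural stronger target being genuine equidistribution $S(Y) = o(\#\mathcal{H}(Y))$, which would yield the explicit bound $\liminf_{Y\to\infty}\mathcal{AR}_{\textrm{an}}(Y) \geq 3/2$. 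After detaching the squarefree part of $g(a,e)$ by a M\"obius inversion and applying quadratic reciprocity, $S(Y)$ becomes a sum over $\mathcal{R}(Y)$ of Jacobi symbols $\left(\frac{m}{g(a,e)}\right)$ (for integers $m$ built from $N_E$) weighted by the sign $\mathrm{sgn}(g(a,e))$, and the goal is to extract cancellation from this two-dimensional sum.

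The main obstacle is precisely this character-sum estimate. Two features make it delicate. First, the modulus is the squarefree part of the degree-six value $g(a,e)$, so isolating it demands a squarefree sieve that is uniform over the whole region $\mathcal{R}(Y)$. Second, once reciprocity is applied, one is left with incomplete sums of quadratic symbols over a region that is elongated in the natural coordinates, for which cancellation is not automatic; I would attack these by summing first in one variable using Poisson summation or a large-sieve inequality, while the sign factor $\mathrm{sgn}(g(a,e))$ supplies additional oscillation as $a$ traverses $\mathcal{R}(Y)$. The most technical point will be to handle uniformly the primes dividing $2 N_E$, where the clean twist formula for $w(E_d)$ fails and must be replaced by its local description, and to ensure that these bad places contribute an honest positive-proportion main term rather than conspiring to force almost all root numbers to equal $-1$.
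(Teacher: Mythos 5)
Your opening reduction is correct and coincides with the paper's first step: every $d \in \mathcal{H}(Y)$ has $\rank_{\textrm{an}} E_d(\mathbb{Q}) \geq 1$, so twists with $\omega(E_d) = +1$ have analytic rank at least $2$, and it suffices to produce a positive proportion of such $d$ in $\mathcal{H}(Y)$. But the way you propose to produce them is a genuine gap: bounding $S(Y) = \sum_{d \in \mathcal{H}(Y)} \omega(E_d)$ by M\"obius inversion, reciprocity and Poisson/large-sieve cancellation is precisely the equidistribution statement that the paper does \emph{not} prove and only expects heuristically (it is the basis of Conjecture \ref{Conjecture}). Note also that after reciprocity there is nothing genuinely oscillating to exploit: for squarefree $d$ prime to $2\mathcal{N}_E$, the root number $\omega(E_d)$ depends only on the sign of $d$ and on $d$ modulo a fixed modulus, so your ``character sum'' is really the problem of equidistributing the squarefree kernel of $g(a,e)$ in residue classes to a fixed modulus; since that kernel's residue class is not determined by $g(a,e) \bmod m$ (dividing out the square part scrambles residues), the M\"obius expansion over square divisors has an uncontrolled tail, and no implementation of the sketched attack is known --- nor does the sketch deliver even your weaker fallback that $S(Y)$ is not asymptotic to $-\#\mathcal{H}(Y)$. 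The paper sidesteps equidistribution entirely: it restricts to pairs $(u,v)$ for which the quartic value $Q(u,v) = u(v^3 + Au^2v + Bu^3)$ is itself squarefree, so that $d = Q(u,v)$ carries the visible point $(uv:1:u^2)$ of canonical height $\frac1{2}\log\max\{|u|,|v|\} + O(1)$, hence lies in $\mathcal{H}(Y)$ once $|u|,|v| \leq C_E Y^2$; on squarefree values, $\omega(E_d)$ is pinned down by congruence conditions on $(u,v)$, and the Gouv\^{e}a--Mazur squarefree sieve (Lemma \ref{Lemma S}, i.e.\ \cite[Proposition 6]{MR1080648}) yields $\gg Y^4$ pairs with either prescribed sign --- a positive-proportion subfamily, with no information needed about the signs of the remaining $d \in \mathcal{H}(Y)$.

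The second gap is your dismissal of the multiplicity question as ``lower-order terms.'' It is one of the two pillars of the proof, not a correction: to convert $\gg Y^4$ parameter pairs into $\gg \#\mathcal{H}(Y)$ \emph{distinct} twists $d$, the paper needs the sharp second-moment bound $\mathcal{R}_Q(Z) = \sum_d r_Q(d;Z)^2 \ll Z^2$ (Lemma \ref{Lemma R}), combined with Cauchy--Schwarz and the matching upper bound $\#\mathcal{H}(Y) \ll Y^4$ (Lemma \ref{Lemma H}). A priori a handful of values $d$ could absorb almost all representations by $Q$, destroying the positive proportion, and nothing in the multiplicity bookkeeping of \cite{LeBoudec} rules this out. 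The proof of the second-moment bound is itself deep: it counts rational points of bounded height on the smooth quartic surface $Q(x_0,x_1) = Q(x_2,x_3)$ via Heath-Brown's theorem \cite[Theorem 10]{MR1906595}, getting $\ll Z_0^{2-2/9+\varepsilon}$ off the finitely many lines, and the saving of $Z_0^{2/9}$ there is exactly what makes the diagonal dominate. Without both of these inputs --- the squarefree sieve with prescribed root number, and the second-moment control --- your outline does not close.
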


We actually prove that, as $d$ runs over $\mathcal{H}(Y)$, there is a positive proportion of curves $E_d$ which have analytic rank even and at least $2$ (and also a positive proportion of curves $E_d$ which have odd analytic rank). Therefore, we also get the following result.

\begin{theorem}
\label{Theorem Algebraic}
Assume the Parity Conjecture. We have the lower bound
\begin{equation*}
\liminf_{Y \to \infty} \mathcal{AR}(Y) > 1.
\end{equation*}
\end{theorem}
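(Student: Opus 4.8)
The plan is to deduce Theorem \ref{Theorem Algebraic} from Theorem \ref{Theorem Analytic}, or rather from the sharper statement announced immediately after it: that, uniformly for large $Y$, a positive proportion of the $d \in \mathcal{H}(Y)$ give curves $E_d$ whose analytic rank is even and at least $2$. The only extra ingredient will be the Parity Conjecture, combined with the elementary fact already recorded in the excerpt that every $d \in \mathcal{H}(Y)$ satisfies $\rank E_d(\mathbb{Q}) \geq 1$, a property forced by the very finiteness of $\eta_d(A,B)$.

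Concretely, I would introduce the subset
\begin{equation*}
\mathcal{H}_2(Y) = \{ d \in \mathcal{H}(Y) : \rank_{\textrm{an}} E_d(\mathbb{Q}) \text{ is even and } \geq 2 \},
\end{equation*}
and invoke the proof of Theorem \ref{Theorem Analytic} to produce a constant $\delta > 0$ with
\begin{equation*}
\liminf_{Y \to \infty} \frac{\# \mathcal{H}_2(Y)}{\# \mathcal{H}(Y)} \geq \delta.
\end{equation*}
Assuming the Parity Conjecture, for every $d$ the integers $\rank_{\textrm{an}} E_d(\mathbb{Q})$ and $\rank E_d(\mathbb{Q})$ share the same parity. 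Hence for $d \in \mathcal{H}_2(Y)$ the algebraic rank is even; since it is also at least $1$, it must be at least $2$. Thus, under Parity, $\rank E_d(\mathbb{Q}) \geq 2$ for every $d \in \mathcal{H}_2(Y)$.

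The conclusion is then immediate. Since $\rank E_d(\mathbb{Q}) \geq 1$ for all $d \in \mathcal{H}(Y)$, while $\rank E_d(\mathbb{Q}) \geq 2$ for $d \in \mathcal{H}_2(Y)$, we obtain
\begin{equation*}
\mathcal{AR}(Y) = \frac{1}{\# \mathcal{H}(Y)} \sum_{d \in \mathcal{H}(Y)} \rank E_d(\mathbb{Q}) \geq 1 + \frac{\# \mathcal{H}_2(Y)}{\# \mathcal{H}(Y)},
\end{equation*}
and taking the limit inferior yields $\liminf_{Y \to \infty} \mathcal{AR}(Y) \geq 1 + \delta > 1$.

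In this reduction there is essentially no difficulty: the entire analytic and geometric content is carried by Theorem \ref{Theorem Analytic}, and the passage to algebraic ranks is a soft consequence of parity. The two points that must genuinely be secured are, first, that the proportion furnished by the analytic theorem is bounded below \emph{uniformly} in $Y$, so that it survives the $\liminf$ rather than being merely positive for each fixed $Y$; and second, that the relevant curves have algebraic rank at least $1$, which is precisely the defining property of $\mathcal{H}(Y)$. The hard work — extracting, from the geometric ordering by $\eta_d(A,B)$, a positive density of curves of even analytic rank at least $2$ — lies entirely upstream, in the proof of the analytic statement.
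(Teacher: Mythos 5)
Your proposal is correct and takes essentially the same route as the paper: the paper's Section 3 proves that $\Omega_{1}(Y) \geq c_E > 0$ uniformly for large $Y$ (a positive proportion of $d \in \mathcal{H}(Y)$ with $\omega(E_d)=1$, hence even analytic rank at least $2$), and then deduces Theorem \ref{Theorem Algebraic} exactly as you do, via the Parity Conjecture together with the fact that $\rank E_d(\mathbb{Q}) \geq 1$ for every $d \in \mathcal{H}(Y)$. Your explicit inequality $\mathcal{AR}(Y) \geq 1 + \#\mathcal{H}_2(Y)/\#\mathcal{H}(Y)$ merely spells out the soft step the paper leaves implicit in ``this clearly implies Theorems \ref{Theorem Analytic} and \ref{Theorem Algebraic}.''
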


The subfamily of curves with analytic rank even and at least $2$ that we use, was first considered by Gouv\^{e}a and Mazur \cite{MR1080648} to prove that
\begin{equation*}
\# \{d \in \mathcal{S}(X), \rank_{\textrm{an}} E_d(\mathbb{Q}) \geq 2 \} \gg X^{1/2 - \varepsilon},
\end{equation*}
for any fixed $\varepsilon > 0$. We note that using Lemma \ref{Lemma R}, one can show that this estimate actually holds with
$\varepsilon = 0$.

The two main tools used to prove Theorems \ref{Theorem Analytic} and \ref{Theorem Algebraic} are the squarefree sieve of Gouv\^{e}a and Mazur \cite{MR1080648} for binary quartic forms (studied independently by Greaves \cite{MR1150469}), and an upper bound of Heath-Brown \cite[Theorem $10$]{MR1906595} for the number of rational points of bounded height on smooth surfaces.

It is interesting to ask what can be established unconditionally about the quantity
$\# \{d \in \mathcal{H}(Y), \rank E_d(\mathbb{Q}) \geq 2 \}$. Following the lines of the proofs of
Theorems \ref{Theorem Analytic} and~\ref{Theorem Algebraic}, and using the work of Stewart and Top
\cite[Proof of Theorem $3$]{MR1290234} instead of \cite{MR1080648}, one can check that
\begin{equation*}
\# \{d \in \mathcal{H}(Y), \rank E_d(\mathbb{Q}) \geq 2 \} \gg \# \mathcal{H}(Y)^{1/4}.
\end{equation*}

We finish this introduction by speculating on the distribution of $\rank E_d(\mathbb{Q})$ as $d$ runs over
$\mathcal{H}(Y)$. Recall that the sign $\omega(E_d)$ of the functional equation of $L(E_d,s)$ is determined by congruence conditions modulo the conductor of the curve $E$. Therefore, it seems reasonable to expect that $\omega(E_d)$ should be equidistributed as $d$ runs over $\mathcal{H}(Y)$. Recall that if $d \in \mathcal{H}(Y)$ then
$\rank_{\textrm{an}} E_d(\mathbb{Q}) \geq 1$. This implies that if we also have $\omega(E_d) = 1$ then
$\rank_{\textrm{an}} E_d(\mathbb{Q})$ is even and at least $2$. Following the general expectation that the analytic rank should in general be as small as possible, and the Birch and Swinnerton-Dyer Conjecture, we propose the following conjecture.

\begin{conjecture}
\label{Conjecture}
For $\iota \in \{1, 2 \}$, we have the estimate
\begin{equation*}
\# \{d \in \mathcal{H}(Y), \rank E_d(\mathbb{Q}) = \iota \} \sim \frac1{2} \# \mathcal{H}(Y).
\end{equation*}
\end{conjecture}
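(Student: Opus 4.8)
The plan is to deduce the estimate from two independent inputs: the equidistribution of the sign of the functional equation over $\mathcal{H}(Y)$, and a Goldfeld-type statement asserting that within this family large ranks are rare. Write $N$ for the conductor of $E$. Since, as recalled just above, the root number $\omega(E_d)$ is determined by congruence conditions on $d$ modulo $N$, the first step is to prove that the elements of $\mathcal{H}(Y)$ become equidistributed in the relevant residue classes as $Y \to \infty$. Concretely, I would refine the counting of $\# \mathcal{H}(Y)$ to arithmetic progressions: the set $\mathcal{H}(Y)$ is parametrised by integral points of bounded height on the family of surfaces $d y^2 = x^3 + A x + B$, and one would reprove the asymptotic for $\#\{ d \in \mathcal{H}(Y) : d \equiv a \imod{m} \}$ with enough uniformity in the modulus $m$, reusing the lattice-point count and the squarefree sieve that already yield the finiteness and the size of $\mathcal{H}(Y)$. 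Granting this, one obtains $\#\{ d \in \mathcal{H}(Y) : \omega(E_d) = \pm 1 \} \sim \tfrac12 \# \mathcal{H}(Y)$.

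The second step splits according to the sign. For $d \in \mathcal{H}(Y)$ with $\omega(E_d) = -1$, the analytic rank is odd, hence $\geq 1$; for $d$ with $\omega(E_d) = +1$, it is even, and since membership in $\mathcal{H}(Y)$ forces $\rank_{\textrm{an}} E_d(\mathbb{Q}) \geq 1$, it is in fact $\geq 2$. To pin the ranks down exactly, I would show that the proportion of $d \in \mathcal{H}(Y)$ with $\rank_{\textrm{an}} E_d(\mathbb{Q}) \geq 3$ is $o(\# \mathcal{H}(Y))$; combined with the parity of the analytic rank this forces analytic rank exactly $1$ for almost all $d$ with $\omega(E_d) = -1$ and exactly $2$ for almost all $d$ with $\omega(E_d) = +1$. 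Finally I would transfer to the algebraic rank: for analytic rank $\leq 1$ the theorems of Kolyvagin and of Breuil--Conrad--Diamond--Taylor already give equality of the two ranks, while for analytic rank $2$ one invokes the full Birch and Swinnerton-Dyer Conjecture (or, as elsewhere in this article, states the conclusion for the algebraic rank under that hypothesis).

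The hard part is the suppression of higher ranks in the second step: establishing $\#\{ d \in \mathcal{H}(Y) : \rank_{\textrm{an}} E_d(\mathbb{Q}) \geq 3 \} = o(\# \mathcal{H}(Y))$ is a Goldfeld-type upper bound restricted to our geometric family, and no such bound is currently known even for the family ordered by $|d|$, where the analogous statement \eqref{rank at least 2} is expected to be out of reach. One might hope that the rigidity of $\mathcal{H}(Y)$---each curve comes equipped with an explicit low-height point, pinning down one generator---makes large ranks easier to exclude, but controlling the frequency of a second or third independent generator seems to require genuinely new input, either a density theorem for the low-lying zeros of $L(E_d,s)$ in this family or an equidistribution statement for Frobenius traces strong enough to force the order of vanishing down. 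By contrast, the root-number equidistribution of the first step is a far more plausible target, since it asks only for a uniform version of the counting already carried out in the proofs of Theorems \ref{Theorem Analytic} and~\ref{Theorem Algebraic}.
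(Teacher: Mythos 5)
The statement you were asked to prove is Conjecture~\ref{Conjecture} of the paper: it is proposed, not proved, and the paper's only support for it is precisely the heuristic you have reconstructed --- $\omega(E_d)$ is determined by congruence conditions modulo the conductor, hence should equidistribute over $\mathcal{H}(Y)$; membership in $\mathcal{H}(Y)$ forces $\rank_{\textrm{an}} E_d(\mathbb{Q}) \geq 1$, so $\omega(E_d) = 1$ forces even analytic rank $\geq 2$; and the ``minimalist'' philosophy together with the Birch and Swinnerton-Dyer Conjecture then suggests rank exactly $1$ or exactly $2$, each half the time. So your outline is a faithful expansion of the paper's motivating discussion. But it is not a proof, and you concede the decisive point yourself: the suppression step $\#\{d \in \mathcal{H}(Y) : \rank_{\textrm{an}} E_d(\mathbb{Q}) \geq 3\} = o(\#\mathcal{H}(Y))$ is a Goldfeld-type bound, and the introduction explicitly describes the analogous statement \eqref{rank at least 2} for the ordering by $|d|$ as expected to be currently out of reach. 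Nothing in the rigidity of $\mathcal{H}(Y)$ is known to change this; the explicit low-height point pins down one generator but gives no handle on the order of vanishing of $L(E_d,s)$. Likewise, your final transfer from analytic to algebraic rank invokes full BSD for the analytic-rank-$2$ curves (Kolyvagin and Breuil--Conrad--Diamond--Taylor cover only analytic rank $\leq 1$), which is appropriate when formulating a conjecture but fatal in a proof.

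Your first step also overstates what the paper's counting machinery delivers. The paper establishes only $\#\mathcal{H}(Y) \ll Y^4$ (Lemma~\ref{Lemma H}) and, via the Cauchy--Schwarz argument combining Lemmas~\ref{Lemma R} and~\ref{Lemma S}, a matching lower bound up to unspecified constants --- there is no asymptotic for $\#\mathcal{H}(Y)$, let alone one uniform in arithmetic progressions. Obtaining such an asymptotic is not a ``uniform version of the counting already carried out'': the second-moment bound $\mathcal{R}_Q(Z) \ll Z^2$ controls, but does not evaluate, the multiplicity with which a given $d$ arises from pairs $(u,v)$ with $Q(u,v) = d$, and $\mathcal{H}(Y)$ moreover contains twists whose minimal non-torsion point is not of the Gouv\^{e}a--Mazur shape $(uv:1:u^2)$; the parametrisation from \cite[Lemma $1$]{LeBoudec} describes all such points, but converting it into an asymptotic count of distinct squarefree $d$ weighted by $\omega(E_d)$ is a genuinely new problem. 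In short: your proposal is an accurate and honest reconstruction of the heuristic behind Conjecture~\ref{Conjecture}, correctly flagging where the obstruction lies, but it does not prove the statement --- and neither does the paper claim to.
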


Conjecture \ref{Conjecture} immediately implies that
\begin{equation*}
\# \{d \in \mathcal{H}(Y), \rank E_d(\mathbb{Q}) \geq 3 \} = o ( \# \mathcal{H}(Y) ),
\end{equation*}
and
\begin{equation*}
\lim_{Y \to \infty} \mathcal{AR}(Y) = \frac{3}{2}.
\end{equation*}

We note that, in Theorems \ref{Theorem Analytic} and \ref{Theorem Algebraic}, we have not tried to obtain lower bounds with explicit dependences on $A$ and $B$. However, that could easily be done since everything amounts to using explicit inequalities for the difference between the Weil height and the canonical height (see for instance \cite{MR1035944}).

\subsection*{Acknowledgements}

It is a great pleasure for the author to thank Peter Sarnak for stimulating conversations related to the topics of this article.

The financial support and the perfect working conditions provided by the \'{E}cole Polytechnique F\'{e}d\'{e}rale de Lausanne are gratefully acknowledged.

\section{Preliminaries}

\label{Section Preliminaries}

This section is devoted to the proofs of three preliminary results which will be the key tools in the proofs of
Theorems \ref{Theorem Analytic} and \ref{Theorem Algebraic}. 

We start by recalling the definition of the Weil height $h_x : \mathbb{P}^2(\mathbb{Q}) \to \mathbb{R}_{\geq 0}$. For
$P \in \mathbb{P}^2(\mathbb{Q})$ with coordinates $(x:y:z)$ where $x, y, z \in \mathbb{Z}$ are such that
$\gcd(x,y,z) = 1$, we set
\begin{equation*}
h_x(P) = \log \max \{ |x|, |z| \}
\end{equation*}
if $(x:y:z) \neq (0:1:0)$ and $h_x(0:1:0) = 0$. In addition, we also recall the definition of the canonical height
$\hat{h}_{E_d} : E_d(\mathbb{Q}) \to \mathbb{R}_{\geq 0}$ on the curve $E_d$. For $P \in E_d(\mathbb{Q})$, we set
\begin{equation*}
\hat{h}_{E_d}(P) = \frac1{2} \lim_{n \to \infty} 4^{-n} h_x(2^n P),
\end{equation*}
where $2^n P$ denotes the point obtained by adding the point $P$ to itself $2^n$ times using the group law of $E_d$.

We can compare the two heights $h_x$ and $\hat{h}_{E_d}$ as follows. The recent work of the author
\cite[Lemma $3$]{LeBoudec} states that, for $P \in E_d(\mathbb{Q})$, we have
\begin{equation}
\label{heights}
\hat{h}_{E_d}(P) = \frac1{2} h_x(P) + O(1),
\end{equation}
where the constant involved in the notation $O$ may depend on $E$ but neither on the point $P$ nor on the integer $d$.

First, we give a sharp upper bound for the cardinality of $\mathcal{H}(Y)$.

\begin{lemma}
\label{Lemma H}
We have the upper bound
\begin{equation*}
\# \mathcal{H}(Y) \ll Y^4.
\end{equation*}
\end{lemma}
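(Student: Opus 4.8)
The plan is to bound $\#\mathcal{H}(Y)$ by counting, for each $d$ with $|\mu(d)|=1$, the existence of a rational point on $E_d$ of small canonical height, and then to convert this into an integral-point count on an auxiliary affine surface. The key observation is the comparison \eqref{heights}: if $d \in \mathcal{H}(Y)$, then there is a non-torsion point $P \in E_d(\mathbb{Q})$ with $\hat{h}_{E_d}(P) \leq Y$, and hence $h_x(P) = 2\hat{h}_{E_d}(P) + O(1) \leq 2Y + O(1)$. Writing $P$ in the form $(x/z^2, y/z^3)$ with coprime integers in the appropriate sense, the bound $h_x(P) \leq 2Y + O(1)$ says that $\max\{|x|,|z|\} \ll e^{2Y}$. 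So each $d \in \mathcal{H}(Y)$ produces an integral solution of the twisted Weierstrass equation with coordinates of size $\ll e^{2Y}$.

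The next step is to make this precise and to count. First I would clear denominators in $dy^2 = x^3 + Ax + B$. Using the standard change of variables that puts a rational point in the form with $x$-coordinate $u/w^2$, one finds integers $u, w$ (with $\gcd$ constraints) satisfying an equation of the shape $d\,v^2 = u^3 + A u w^4 + B w^6$ for some integer $v$, where the bound on $h_x(P)$ forces $|u| \ll e^{4Y}$ and $|w^2| \ll e^{4Y}$, i.e. $|u|, |w|$ and $|v|$ are all $\ll e^{O(Y)}$. Crucially, since $|\mu(d)| = 1$, the value of $d$ is \emph{determined} by the triple $(u,v,w)$: indeed $d$ is the squarefree part of $u^3 + Auw^4 + Bw^6$ (equivalently $d = (u^3 + Auw^4 + Bw^6)/v^2$ up to sign and the squarefree normalization). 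Therefore the map $d \mapsto (u,v,w)$ is injective, and $\#\mathcal{H}(Y)$ is bounded by the number of such integral triples with all coordinates $\ll e^{O(Y)}$.

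It then remains to count integral points $(u,v,w)$ on the affine threefold $d v^2 = u^3 + Auw^4 + Bw^6$ in a box. The naive count is too large, so I would exploit the squarefree normalization: for each fixed $w$, the quantity $u^3 + Auw^4 + Bw^6$ must be of the form $d v^2$ with $d$ squarefree, and the number of pairs $(u,v)$ in a box with $u^3 + Auw^4 + Bw^6 = dv^2$ is controlled by writing $n = u^3 + Auw^4 + Bw^6$ and summing over $n$ in a range the number of ways $n$ is a squarefree number times a square. Summing $|u| \ll e^{4Y}$ over each $w$ with $|w| \ll e^{2Y}$, and using that for each $u,w$ there are boundedly many $v$ (since $v^2 = n/d$ is determined once the squarefree part $d$ is fixed, but here we want all $d$, so $v$ ranges over divisors-of-square type), the total is $\ll e^{4Y} \cdot e^{2Y} \cdot (\text{log factors})$, which must be organized to give $e^{8Y} = (e^{2Y})^4 \asymp Y$-independent...

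\emph{Correction of scale.} The natural parameter is $T = e^{Y}$, so that $|x| \ll T^2$ and $|z| \ll T$ in projective coordinates $(x:y:z)$. The cleanest route is thus to count integral points on the \emph{surface} cut out in $\mathbb{P}^2$ (or its affine model) and to invoke the uniform upper bound of Heath-Brown \cite[Theorem $10$]{MR1906595} announced in the introduction. Parametrizing by the point rather than by $d$, the locus of triples $(x,z,d)$ with $dy^2 = x^3 + Ax + B$ and $\max\{|x|,|z|\} \ll T^2$ lies on a smooth surface, and the dimensional count yields $\ll T^{4+\varepsilon}$ such points; the squarefree-part injectivity $d \mapsto (x,z)$ removes the overcounting by $d$. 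This gives $\#\mathcal{H}(Y) \ll e^{(4+\varepsilon)Y}$, and a more careful bookkeeping with the exponents (tracking that the $x$-coordinate has weight $2$ and so the effective box has volume $\asymp T^4$ with $T = e^Y$) sharpens this to the claimed $\#\mathcal{H}(Y) \ll Y^4$ once one recognizes that the relevant counting variable is not $e^Y$ but the height bound $Y$ itself entering polynomially through the volume of the region $\{\hat{h}_{E_d}(P) \leq Y\}$.

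The main obstacle I anticipate is obtaining the \emph{sharp} exponent $4$ (rather than a weaker power or an extraneous $e^{O(Y)}$ factor). The subtlety is that a rational point of canonical height $\leq Y$ has naive projective height $\ll e^{2Y}$, so a crude box-count would give an exponentially large bound, not the polynomial $Y^4$. Resolving this requires interpreting the problem correctly: one does not count all small-height points, but rather uses that each $d$ contributes essentially \emph{one} point (its generator of minimal height), and that the distribution of minimal heights $\log \eta_d(A,B)$ across squarefree $d$ is governed by the geometry of the Mordell--Weil lattices — so that the count of $d$ with $\log \eta_d(A,B) \leq Y$ grows only polynomially in $Y$. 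Concretely, the hard step is to show that the number of squarefree $d$ admitting an integral point $(u,v,w)$ with $\log\max\{|u|^{1/2},|w|\} \leq Y + O(1)$ on the corresponding twist is $\ll Y^4$; I would establish this by fixing the point coordinates up to the square-class normalization and counting, via the injectivity $d \mapsto$ (point), the integral points in the genuinely four-dimensional parameter region, invoking \cite[Theorem $10$]{MR1906595} to control points on the defining surface and thereby pin down the exponent $4$.
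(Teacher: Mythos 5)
There is a genuine gap, and it occurs at the very first step: you misread the normalization of $\eta_d(A,B)$. By definition, $\log \eta_d(A,B)$ equals the minimal canonical height, so the condition $d \in \mathcal{H}(Y)$, i.e.\ $\eta_d(A,B) \leq Y$, provides a non-torsion point $P$ with $\hat{h}_{E_d}(P) \leq \log Y$, \emph{not} $\hat{h}_{E_d}(P) \leq Y$. Via \eqref{heights} this gives $\exp h_x(P) \ll Y^2$, so the naive coordinates of $P$ lie in a box of \emph{polynomial} size $\ll Y^2$, not $\ll e^{2Y}$. Your entire subsequent struggle --- the exponential boxes $e^{O(Y)}$, the rescaling $T = e^Y$, the appeal to Heath-Brown \cite[Theorem $10$]{MR1906595}, and the concluding hope that careful bookkeeping converts $e^{(4+\varepsilon)Y}$ into $Y^4$ --- stems from this misreading, and the proposed repair cannot work: no bookkeeping turns an exponential bound into a polynomial one, and your final claim that the number of $d$ with minimal height $\leq Y$ ``grows only polynomially in $Y$'' by the geometry of Mordell--Weil lattices is precisely the statement of the lemma, so invoking it is circular. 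You correctly anticipated that the sharp exponent $4$ is the obstacle, but you located the difficulty in the wrong place: it is resolved by reading the definition correctly, not by any deep counting input.

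With the correct scale, the lemma is elementary and needs no theorem on rational points of surfaces (in the paper, \cite[Theorem $10$]{MR1906595} is used only for the second-moment bound of Lemma \ref{Lemma R}, not here). Your two structural ideas are sound and do match the paper's proof: writing the point via a twist-adapted integral parametrization, which in the paper takes the form $d_0 y^2 = x_1^3 + A x_1 d_1^2 b_1^4 + B d_1^3 b_1^6$ with $d = d_0 d_1$ split according to which part of the squarefree $d$ divides the denominator of the $x$-coordinate (this is \cite[Lemma $1$]{LeBoudec}, slightly different from your $d v^2 = u^3 + A u w^4 + B w^6$ in which $d$ is kept whole), and the injectivity observation that the squarefree normalization pins down the twist: for fixed $(x_1, d_1, b_1)$ the pair $(d_0, y)$ is uniquely determined, since $d_0$ is squarefree and $y \geq 1$. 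Once the box is $|x_1| \ll Y^2$ and $d_1 b_1^2 \ll Y^2$, these two observations immediately give $\# \mathcal{H}(Y) \ll \sum_{b_1 \ll Y} Y^2 \cdot ( Y^2 / b_1^2 ) \ll Y^4$, which is exactly the paper's argument.
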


\begin{proof}
We have
\begin{equation*}
\# \mathcal{H}(Y) \leq \sum_{d \in \mathbb{Z}} |\mu(d)|
\# \{ P \in E_d(\mathbb{Q}) \smallsetminus E_d(\mathbb{Q})_{\tors}, \exp \hat{h}_{E_d}(P) \leq Y \}.
\end{equation*}
By the estimate \eqref{heights}, we also have
\begin{equation*}
\# \mathcal{H}(Y) \leq \sum_{d \in \mathbb{Z}} |\mu(d)|
\# \{ P \in E_d(\mathbb{Q}) \smallsetminus E_d(\mathbb{Q})_{\tors}, \exp h_x(P) \ll Y^2 \}.
\end{equation*}
We note that if $P \in E_d(\mathbb{Q}) \smallsetminus E_d(\mathbb{Q})_{\tors}$ has coordinates
$(x:y:z) \in \mathbb{P}^2(\mathbb{Q})$, then $z \neq 0$ since $P$ is not the point at infinity, and also $y \neq 0$ since $P$ is not a $2$-torsion point. In addition, we recall that exchanging $P$ and $-P$ amounts to multiplying $y$ by $-1$. Finally, we note that the statement of \cite[Lemma $1$]{LeBoudec} also holds if we respectively replace the assumption $d \geq 1$ by $d \neq 0$, and the conclusion $d_0 \geq 1$ by $d_0 \neq 0$. Therefore, using
\cite[Lemma $1$]{LeBoudec}, we deduce
\begin{equation*}
\# \mathcal{H}(Y) \leq
2 \# \left\{ (d_0, d_1, b_1, y, x_1) \in \mathbb{Z} \times \mathbb{Z}_{\geq 1}^3 \times \mathbb{Z},
\begin{array}{l}
|\mu(d_0 d_1)| = 1 \\
\gcd(x_1, d_1 b_1) = 1 \\
d_0 y^2 = x_1^3 + A x_1 d_1^2 b_1^4 + B d_1^3 b_1^6 \\
|x_1|, d_1 b_1^2 \ll Y^2
\end{array}
\right\}.
\end{equation*}
This gives
\begin{equation*}
\# \mathcal{H}(Y) \leq 2 \sum_{|x_1|, d_1 b_1^2 \ll Y^2}
\# \left\{ (d_0, y) \in \mathbb{Z} \times \mathbb{Z}_{\geq 1},
\begin{array}{l}
|\mu(d_0)| = 1 \\
d_0 y^2 = x_1^3 + A x_1 d_1^2 b_1^4 + B d_1^3 b_1^6
\end{array}
\right\}.
\end{equation*}
For fixed $(d_1, b_1, x_1) \in \mathbb{Z}_{\geq 1}^2 \times \mathbb{Z}$, the cardinality in the right-hand side is at most $1$, so we get
\begin{equation*}
\# \mathcal{H}(Y) \ll Y^4,
\end{equation*}
as wished.
\end{proof}

Recall that $(A,B) \in \mathbb{Z}^2$ satisfies $4 A^3 + 27 B^2 \neq 0$. We introduce the binary quartic form
\begin{equation*}
Q(u,v) = u (v^3 + A u^2 v + B u^3),
\end{equation*}
and also, for $d \in \mathbb{Z}$, we set
\begin{equation}
\label{definition r_Q}
r_Q(d; Z) = \# \left\{ |u|, |v| \leq Z, Q(u,v) = d \right\}.
\end{equation}
We need to investigate the second moment of $r_Q(d; Z)$. We thus introduce
\begin{equation}
\label{definition R_Q}
\mathcal{R}_Q(Z) = \sum_{d \in \mathbb{Z}} r_Q(d; Z)^2.
\end{equation}

We now establish a sharp upper bound for the quantity $\mathcal{R}_Q(Z)$.

\begin{lemma}
\label{Lemma R}
We have the upper bound
\begin{equation*}
\mathcal{R}_Q(Z) \ll Z^2.
\end{equation*}
\end{lemma}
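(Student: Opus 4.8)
The plan is to read $\mathcal{R}_Q(Z)$ geometrically and then feed it into the upper bound of Heath-Brown for smooth surfaces. Expanding the square in \eqref{definition R_Q}, one sees that $\mathcal{R}_Q(Z)$ counts quadruples,
\begin{equation*}
\mathcal{R}_Q(Z) = \# \left\{ (u_1, v_1, u_2, v_2) \in \mathbb{Z}^4, |u_i|, |v_i| \leq Z, Q(u_1, v_1) = Q(u_2, v_2) \right\}.
\end{equation*}
Since $Q$ is homogeneous of degree $4$, the form $F(u_1, v_1, u_2, v_2) = Q(u_1, v_1) - Q(u_2, v_2)$ is a quartic form in four variables, and these quadruples are precisely the integer points lying in the box $[-Z,Z]^4$ on the affine cone over the projective surface $S = \{ F = 0 \} \subset \mathbb{P}^3$. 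First I would record the trivial contribution of the diagonal $u_1 = u_2$, $v_1 = v_2$ (and of $u_1 = -u_2$, $v_1 = -v_2$), which already produces $\gg Z^2$ solutions; the entire point is therefore to show that nothing else contributes more than $O(Z^2)$, so that the bound is sharp.

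The key geometric input is that $S$ is smooth, and this is exactly where the hypothesis $4A^3 + 27B^2 \neq 0$ enters. The singular locus of the cone over $S$ is cut out by the vanishing of the four partial derivatives of $F$, that is by $Q_u(u_1,v_1) = Q_v(u_1,v_1) = 0$ together with $Q_u(u_2,v_2) = Q_v(u_2,v_2) = 0$. A short computation with $Q_u = v^3 + 3Au^2v + 4Bu^3$ and $Q_v = u(3v^2 + Au^2)$ shows that the only common zero of $Q_u$ and $Q_v$ is the origin: the case $u \neq 0$ would force $3v^2 = -Au^2$ and then $Q_u = 0$, which together give precisely $4A^3 + 27B^2 = 0$. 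Equivalently, $Q$ is a binary quartic with four distinct roots. Hence $S$ has no singular point, so it is a geometrically integral smooth quartic surface in $\mathbb{P}^3$, and in particular it contains only finitely many lines, their number being $O(1)$.

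It then remains to count rational points on $S$, splitting $S(\mathbb{Q})$ into the points lying on one of the finitely many lines of $S$ and the remaining points. For the latter, Heath-Brown's bound \cite[Theorem $10$]{MR1906595} for smooth surfaces gives $\ll B^{3/\sqrt{4} + \varepsilon} = B^{3/2 + \varepsilon}$ rational points of height at most $B$ off the lines. To return to the box count, each point of $S(\mathbb{Q})$ of height $h$ contributes $O(Z/h)$ integer points of the cone in $[-Z,Z]^4$, namely its integer multiples, so by partial summation the off-line points contribute $\ll Z \cdot Z^{1/2 + \varepsilon} = Z^{3/2 + \varepsilon}$. As for the lines, each line defined over $\mathbb{Q}$ lifts to a rational $2$-plane through the origin in $\mathbb{A}^4$, carrying $\ll Z^2$ integer points in the box, while a line not defined over $\mathbb{Q}$ carries at most one rational point; since there are finitely many lines, their total contribution is $O(Z^2)$. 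Combining these, I obtain $\mathcal{R}_Q(Z) \ll Z^2 + Z^{3/2+\varepsilon} \ll Z^2$.

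The main obstacle is really the smoothness verification: everything hinges on being allowed to invoke the smooth-surface bound, hence on checking that the only common zero of $Q_u$ and $Q_v$ is the origin, which is exactly the content of $4A^3 + 27B^2 \neq 0$. The remaining difficulty is bookkeeping: one must ensure that the genuine main term of order $Z^2$ comes only from the finitely many lines (in particular the diagonal), and that the exponent $3/\sqrt{4} = 3/2$ delivered off the lines is strictly below $2$, so that after summing over integer multiples the off-line points stay of strictly lower order than $Z^2$.
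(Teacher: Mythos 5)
Your proof is correct and essentially the same as the paper's: both read $\mathcal{R}_Q(Z)$ as a count of rational points of bounded height on the quartic surface $V \subset \mathbb{P}^3$ defined by $Q(x_0,x_1) = Q(x_2,x_3)$, use $4A^3 + 27B^2 \neq 0$ to verify smoothness (the paper leaves this as "easy to check"; your explicit computation with $Q_u$, $Q_v$ is exactly the intended verification), apply Heath-Brown's bound for smooth surfaces off the lines, and let the finitely many lines absorb the $\ll Z^2$ main term --- your descent from projective points to integer points in the box via multiples and partial summation is equivalent to the paper's stratification by the gcd $k$ and the sum $\sum_{k \leq Z} (Z/k)^2$. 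The one slip is the exponent you attribute to \cite[Theorem $10$]{MR1906595}: for a smooth surface of degree $d$ that theorem gives $\ll B^{4/3 + 16/(9d) + \varepsilon}$ points of height at most $B$ off the lines, hence $\ll B^{2 - 2/9 + \varepsilon}$ for $d = 4$ as the paper quotes, not $B^{3/\sqrt{d} + \varepsilon} = B^{3/2 + \varepsilon}$; this misquotation is harmless here, since any exponent strictly below $2$ keeps the off-line contribution at $o(Z^2)$ after summing over multiples, but you should cite the correct exponent.
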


\begin{proof}
We have
\begin{equation*}
\mathcal{R}_Q(Z) = \# \left\{ |u|, |v|, |w|, |t| \leq Z, Q(u,v) = Q(w,t) \right\},
\end{equation*}
and thus
\begin{equation*}
\mathcal{R}_Q(Z) = 1 + \sum_{k \leq Z}
\# \left\{ |u_0|, |v_0|, |w_0|, |t_0| \leq Z_0,
\begin{array}{l}
Q(u_0, v_0) = Q(w_0, t_0) \\
\gcd(u_0, v_0, w_0, t_0) = 1
\end{array}
\right\},
\end{equation*}
where we have set $Z_0 = Z/k$. Let us now introduce the exponential naive height
$H : \mathbb{P}^3(\mathbb{Q}) \to \mathbb{R}_{> 0}$. For $\mathbf{x} = (x_0 : x_1 : x_2 : x_3) \in \mathbb{P}^3(\mathbb{Q})$, where $x_0, x_1, x_2, x_3 \in \mathbb{Z}$ are such that $\gcd(x_0, x_1, x_2, x_3) = 1$, we set
\begin{equation*}
H(\mathbf{x}) = \max \{ |x_0|, |x_1|, |x_2|, |x_3| \}.
\end{equation*}
Let us also denote by $V$ the quartic surface defined by the equation 
\begin{equation*}
Q(x_0,x_1) - Q(x_2,x_3)= 0.
\end{equation*}
Using this notation, we have
\begin{equation*}
\mathcal{R}_Q(Z) = 1 + 2 \sum_{k \leq Z} \# \left\{ \mathbf{x} \in V(\mathbb{Q}),  H(\mathbf{x}) \leq Z_0 \right\}.
\end{equation*}
Let $U$ be the Zariski open subset of $V$ defined by removing the lines from $V$. Since $4 A^3 + 27 B^2 \neq 0$, it is easy to check that $V$ is a smooth surface. Thus, the result of Heath-Brown \cite[Theorem $10$]{MR1906595} gives
\begin{equation*}
\# \left\{ \mathbf{x} \in U(\mathbb{Q}),  H(\mathbf{x}) \leq Z_0 \right\} \ll Z_0^{2 - 2/9 + \varepsilon},
\end{equation*}
for any fixed $\varepsilon > 0$. In addition, the contribution of the rational points on a line of $V$ is $\ll Z_0^2$. Since the number of lines contained in $V$ is finite, we have
\begin{equation*}
\# \left\{ \mathbf{x} \in (V \smallsetminus U)(\mathbb{Q}),  H(\mathbf{x}) \leq Z_0 \right\} \ll Z_0^2.
\end{equation*}
We finally obtain
\begin{equation*}
\mathcal{R}_Q(Z) \ll \sum_{k \leq Z} \left( \frac{Z}{k} \right)^2,
\end{equation*}
which completes the proof.
\end{proof}

It is worth noting that the constant involved in the upper bound of Heath-Brown \cite[Theorem $10$]{MR1906595} is independent of $Q$. In addition, a result of Colliot-Th\'{e}l\`{e}ne \cite[Appendix, Theorem]{MR1906595} implies that the number of lines contained in $V$ is bounded by a constant independent of $Q$. As a result, the constant involved in the upper bound in Lemma \ref{Lemma R} is also independent of $Q$.

Recall that $\omega(E_d)$ denotes the sign of the functional equation of $L(E_d,s)$. For $\nu \in \{ -1, 1 \}$, we introduce
\begin{equation}
\label{definition S_Q}
\mathcal{S}_{Q,\nu}(Z) = \sum_{\substack{d \in \mathbb{Z} \\ \omega(E_d) = \nu}} |\mu(d)| r_Q(d; Z).
\end{equation}

We state a sharp lower bound for the quantity $\mathcal{S}_{Q,\nu}(Z)$ under a mild assumption which is not restrictive in practice.

\begin{lemma}
\label{Lemma S}
Let $\mathcal{N}_E$ be the conductor of the curve $E$. Assume that $A, B \mid 12 \mathcal{N}_E$. We have the lower bound
\begin{equation*}
\mathcal{S}_{Q,\nu}(Z) \gg Z^2.
\end{equation*}
\end{lemma}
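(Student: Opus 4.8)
The plan is to interpret the weighted sum $\mathcal{S}_{Q,\nu}(Z)$ directly as a lattice-point count. Since $|\mu(d)| \, r_Q(d;Z)$ counts each squarefree value $d = Q(u,v)$ with multiplicity equal to its number of representations in the box $[-Z,Z]^2$, we have
\[
\mathcal{S}_{Q,\nu}(Z) = \# \left\{ (u,v) \in \mathbb{Z}^2 : |u|, |v| \le Z,\ |\mu(Q(u,v))| = 1,\ \omega(E_{Q(u,v)}) = \nu \right\}.
\]
First I would record the structural fact that $Q = u \cdot g$ with $g(u,v) = v^3 + A u^2 v + B u^3$, and that the discriminant of the binary cubic $g$ equals $-(4 A^3 + 27 B^2) \neq 0$; since moreover $g(0,v) = v^3$ shows that the linear form $u$ does not divide $g$, the quartic form $Q$ has four distinct linear factors over $\overline{\mathbb{Q}}$, i.e.\ it is separable. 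This is exactly the hypothesis needed to run a squarefree sieve on the values of $Q$.

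Second, I would translate the condition $\omega(E_{Q(u,v)}) = \nu$ into congruence and sign conditions on $(u,v)$. For squarefree $d$ coprime to $2 \mathcal{N}_E$, the root number $\omega(E_d)$ is the standard product of local root numbers and depends only on $\mathrm{sign}(d)$ together with the residue of $d$ modulo a fixed modulus $M$ (a suitable multiple of $8 \mathcal{N}_E$). Hence there exist a sign $\epsilon \in \{-1,1\}$ and a residue $a$ coprime to $M$ such that every squarefree $d \equiv a \pmod{M}$ of sign $\epsilon$ satisfies $\omega(E_d) = \nu$. It then suffices to exhibit a residue class $(u_0, v_0) \bmod M$ with $Q(u_0, v_0) \equiv a \pmod{M}$, together with a quadrant in which $Q$ takes the sign $\epsilon$. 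This is where the hypothesis $A, B \mid 12 \mathcal{N}_E$ is used: it lets me compute $Q(u,v) \bmod p$ for the primes $p \mid M$ (at primes dividing both $A$ and $B$ the form reduces to $u v^3$, which represents every unit), and thereby verify that the primitive class $a$ lies in the image of $Q$ modulo $M$.

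Finally, I would restrict the count to $(u,v) \equiv (u_0, v_0) \pmod{M}$ inside the appropriate quadrant and apply the squarefree sieve of Gouv\^{e}a--Mazur (equivalently Greaves) for binary quartic forms to the separable form $Q$. Because $a$ is coprime to $M$, no prime dividing $M$ can divide $Q(u,v)$, so squarefreeness needs only be imposed at primes $p \nmid M$, and separability guarantees that at each such $p$ the local density of $(u,v)$ with $p^2 \mid Q(u,v)$ is bounded away from $1$; there is thus no local obstruction, and the sieve yields $\gg (Z/M)^2 \gg Z^2$ admissible pairs. Each such pair has $Q(u,v)$ squarefree, lying in the class $a \bmod M$ of sign $\epsilon$, hence with $\omega(E_{Q(u,v)}) = \nu$, so it contributes to $\mathcal{S}_{Q,\nu}(Z)$, giving the claimed bound. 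I expect the genuine difficulty to be the second step: making the root-number congruence description precise at the bad primes and at $2$, and checking via $A, B \mid 12 \mathcal{N}_E$ that the target class $a$ is simultaneously primitive and represented by $Q$, so that the sieve densities remain positive.
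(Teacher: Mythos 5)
Your first display (unfolding $\mathcal{S}_{Q,\nu}(Z)$ into a lattice-point count) is exactly how the paper's proof begins, but there the resemblance ends: the paper's entire proof is a citation of Proposition $6$ of Gouv\^{e}a--Mazur, which under precisely the hypothesis $A, B \mid 12 \mathcal{N}_E$ asserts $\mathcal{S}_{Q,\nu}(Z) \geq c Z^2 + O ( Z/(\log Z)^{1/2} )$ with an explicit $c > 0$. So your proposal is an attempt to reconstruct the cited proposition itself. Your steps one and three are sound: $Q$ is separable since $g(u,v) = v^3 + A u^2 v + B u^3$ has discriminant $-(4A^3 + 27B^2)u^6 \neq 0$ in $v$ and $u \nmid g$, and the Greaves/Gouv\^{e}a--Mazur sieve does deliver $\gg Z^2$ squarefree values once one has fixed admissible congruence and sign conditions with positive local densities.

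The genuine gap is in your step two, exactly where you flagged the difficulty, but it is worse than you suggest. The hypothesis $A, B \mid 12 \mathcal{N}_E$ says that $A$ and $B$ \emph{divide} $M = 12 \mathcal{N}_E$; it does not say that the primes dividing $M$ divide $A$ and $B$. In particular $2$ and $3$ always divide $M$, and at any prime $p \mid M$ with $p \nmid \gcd(A,B)$ your reduction $Q \equiv u v^3 \imod{p}$ is simply unavailable, so your mechanism for hitting the target unit class $a$ fails precisely at the primes that matter (the bad primes and $2$). Moreover, a binary quartic need not represent every unit modulo small prime powers, so the representability of $a$ cannot be taken for granted, and your insistence that $d = Q(u,v)$ be \emph{coprime} to $M$ is likely unachievable: the way the divisibility hypothesis is actually exploited is, roughly, to impose $v \equiv 0 \imod{M}$, whence $A \mid M$ and $B \mid M$ force $Q(u,v) \equiv B u^4 \imod{M^2}$ --- values divisible by $B$, hence \emph{not} coprime to $M$. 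One must therefore use the stronger fact (recalled in the paper's introduction) that for squarefree $d$ the root number $\omega(E_d)$ is determined by the sign of $d$ and congruences modulo a fixed multiple of the conductor \emph{without} any coprimality restriction, and then verify that both values $\nu = \pm 1$ are attained on classes of the form $B u^4$ together with a choice of sign region. Carrying this out at the bad primes is the entire content of the cited Proposition $6$; your outline has the right shape, but as written it both misreads the divisibility hypothesis and imposes a coprimality constraint that the actual construction violates.
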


\begin{proof}
We have
\begin{equation*}
\mathcal{S}_{Q,\nu}(Z) = \sum_{\substack{|u|, |v| \leq Z \\ \omega(E_{Q(u,v)}) = \nu}} |\mu(Q(u,v))|.
\end{equation*}
In \cite[Proposition $6$]{MR1080648}, Gouv\^{e}a and Mazur use their squarefree sieve for binary quartic forms to prove the existence of an explicit constant $c > 0$ such that
\begin{equation*}
\mathcal{S}_{Q,\nu}(Z) \geq c Z^2 + O \left( \frac{Z}{(\log Z)^{1/2}} \right),
\end{equation*}
which completes the proof.
\end{proof}

\section{Proofs of Theorems \ref{Theorem Analytic} and \ref{Theorem Algebraic}}

Recall that $\mathcal{N}_E$ denotes the conductor of the curve $E$ and let $M = 12 \mathcal{N}_E$. We can assume without loss of generality that $E$ is given by a model
\begin{equation*}
y^2 = x^3 + A x + B,
\end{equation*}
for which we have $A, B \mid M$. Indeed, this can be seen by making the substitution $(x,y) \mapsto (x/M^2, y/M^3)$, and then clearing denominators.

We prove that, as $d$ runs over $\mathcal{H}(Y)$, there is a positive proportion of curves $E_d$ for which
$\rank_{\textrm{an}} E_d(\mathbb{Q})$ is even and at least $2$, and also a positive proportion of curves $E_d$ for which $\rank_{\textrm{an}} E_d(\mathbb{Q})$ is odd. This clearly implies Theorems \ref{Theorem Analytic} and \ref{Theorem Algebraic}.

As already mentioned, if $d \in \mathcal{H}(Y)$ then $\rank_{\textrm{an}} E_d(\mathbb{Q})$ is even and at least $2$ if and only if $\omega(E_d) = 1$. For $\nu \in \{ -1, 1 \}$, we set
\begin{equation*}
\Omega_{\nu}(Y) = \frac1{\# \mathcal{H}(Y)}
\sum_{\substack{d \in \mathcal{H}(Y) \\ \omega(E_d) = \nu}} 1.
\end{equation*}
Our goal is thus to prove that 
\begin{equation}
\label{result Omega}
\liminf_{Y \to \infty} \Omega_{\nu}(Y) > 0.
\end{equation}

First, we use Lemma \ref{Lemma H} to obtain
\begin{equation}
\label{lower bound 1}
\Omega_{\nu}(Y) \gg \frac1{Y^4}
\sum_{\substack{d \in \mathcal{H}(Y) \\ \omega(E_d) = \nu}} 1.
\end{equation}

We note that if $(u,v) \in \mathbb{Z}^2$ is such that $|\mu(Q(u,v))| = 1$ then the point $P$ with coordinates $(uv:1:u^2)$ satisfies $P \in E_{Q(u,v)}(\mathbb{Q})$. By \cite[Lemma $3$]{LeBoudec}, we have
\begin{equation}
\label{heights 2}
\hat{h}_{E_{Q(u,v)}}(P) = \frac1{2} h_x(P) + O(1),
\end{equation}
where the constant involved in the notation $O$ may depend on $E$ but not on $(u,v)$. The assumption
$|\mu(Q(u,v))| = 1$ implies that $u$ and $v$ are coprime and thus
\begin{equation*}
h_x(P) = \log \max \{ |u|, |v| \}.
\end{equation*}
As a result, if $\max \{ |u|, |v| \}$ is larger than a constant which depends at most on $E$, then the estimate \eqref{heights 2} implies that $\hat{h}_{E_{Q(u,v)}}(P) \neq 0$ and thus $P$ is not a torsion point of
$E_{Q(u,v)}(\mathbb{Q})$. Therefore, we have
\begin{equation*}
\log \eta_{Q(u,v)}(A,B) \leq \hat{h}_{E_{Q(u,v)}}(P),
\end{equation*}
which gives
\begin{equation*}
\eta_{Q(u,v)}(A,B) \ll \max \{ |u|^{1/2}, |v|^{1/2} \},
\end{equation*}
where the constant involved in the notation $\ll$ depends at most on $E$. Therefore, we have proved that there exists a constant $C_E > 0$ such that if $(u,v) \in \mathbb{Z}^2$ satisfies $|\mu(Q(u,v))| = 1$ and $|u|, |v| \leq C_E Y^2$, then
$\eta_{Q(u,v)}(A,B) \leq Y$, apart from a certain number of exceptions bounded only in terms of $E$. In other words, if we introduce the set
\begin{equation*}
\mathcal{Q}(Y) = \left\{d \in \mathbb{Z}, |\mu(d)| = 1, r_Q(d; C_E Y^2) \geq 1 \right\},
\end{equation*}
where $r_Q(d; C_E Y^2)$ is defined in \eqref{definition r_Q}, we have
\begin{equation*}
\# \left( \mathcal{Q}(Y) \smallsetminus \mathcal{H}(Y) \right) \ll 1,
\end{equation*}
where the constant involved in the notation $\ll$ depends at most on $E$.

Recalling the lower bound \eqref{lower bound 1}, we get
\begin{equation}
\label{lower bound 2}
\Omega_{\nu}(Y) \gg \frac1{Y^4}
\sum_{\substack{d \in \mathcal{Q}(Y) \\ \omega(E_d) = \nu}} 1 + O \left( \frac1{Y^4} \right).
\end{equation}
Recall the respective definitions \eqref{definition R_Q} and \eqref{definition S_Q} of $\mathcal{R}_Q(Z)$ and
$\mathcal{S}_{Q, \nu}(Z)$. We use the Cauchy-Schwarz inequality to obtain
\begin{equation}
\label{lower bound 3}
\sum_{\substack{d \in \mathcal{Q}(Y) \\ \omega(E_d) = \nu}} 1 \geq
\frac{\mathcal{S}_{Q, \nu}(C_E Y^2)^2}{\mathcal{R}_Q(C_E Y^2)}.
\end{equation}
Putting together the lower bounds \eqref{lower bound 2} and \eqref{lower bound 3}, we get
\begin{equation*}
\Omega_{\nu}(Y) \gg \frac{\mathcal{S}_{Q, \nu}(C_E Y^2)^2}{Y^4 \mathcal{R}_Q(C_E Y^2)}
+ O \left( \frac1{Y^4} \right).
\end{equation*}
Lemma \ref{Lemma R} states that $\mathcal{R}_Q(C_E Y^2) \ll Y^4$. In addition, since we have assumed that
$A, B \mid 12 \mathcal{N}_E$, we can use Lemma \ref{Lemma S} to obtain $\mathcal{S}_{Q, \nu}(C_E Y^2) \gg Y^4$. Therefore, we deduce the existence of a constant $c_E > 0$, depending at most on $E$, such that
\begin{equation*}
\Omega_{\nu}(Y) \geq c_E,
\end{equation*}
which completes the proof of the lower bound \eqref{result Omega}, and thus also the proofs of
Theorems \ref{Theorem Analytic} and \ref{Theorem Algebraic}.

\bibliographystyle{amsalpha}
\bibliography{biblio}

\end{document}